\newtheorem{theorem}{Theorem}
\newtheorem*{theoremH}{Helson's Theorem}
\newtheorem*{theoremI}{Hitt's Theorem}
\newtheorem*{theoremY}{Hayashi's Theorem}
\newtheorem*{theoremS}{Sarason's Theorem}
\theoremstyle{plain}
\newtheorem*{lem}{Lemma}
\newtheorem*{wn}{Corollary}
\theoremstyle{definition}
\newtheorem*{rem}{Remark}
\theoremstyle{definition}
\newtheorem*{ex}{Example}
\DeclareMathOperator{\im}{Im}
\def\C{\mathbb{C}}%
\def\D{\mathbb{D}}%
\def\N{\mathbb{N}}%
\def\H{\mathcal{H}}%
\def\M{\mathcal{M}}%
\def\P{\mathcal{P}}%
\date{\today}
\begin{document}
\title{On kernels of Toeplitz operators}
\author{M. T. Nowak, P. Sobolewski, A. So\l tysiak and M. Wo\l oszkiewicz-Cyll}

\address{Maria T. Nowak,  Pawe{\l} Sobolewski, Magdalena Wo{\l}oszkiewicz-Cyll
\newline Institute of Mathematics
\newline Maria Curie-Sk{\l}odowska University \newline pl. M.
Curie-Sk{\l}odowskiej 1 \newline 20-031 Lublin, Poland}
\email{mt.nowak@poczta.umcs.lublin.pl,
pawel.sobolewski@poczta.umcs.lublin.pl,
woloszkiewicz@poczta.umcs.lublin.pl}

\address{Andrzej So{\l}tysiak  \newline Faculty of Mathematics and Computer
Science
\newline Adam Mickiewicz University \newline ul. Uniwersytetu Pozna\'nskiego 4 \newline 61-614 Pozna\'n, Poland}
\email{asoltys@amu.edu.pl}

\subjclass [2010]{47B32, 46E22, 30H10} \keywords{Toeplitz operators,
de Branges-Rovnyak spaces, nearly invariant subspaces, rigid
functions, nonextreme functions, kernel functions}

\begin{abstract} We apply the theory of de Branges-Rovnyak spaces  to
describe kernels of some Toeplitz operators on the classical Hardy
space $H^2$. In particular, we discuss the kernels of the operators
$T_{\bar f/ f}$ and $T_{\bar I\bar f/ f}$, where $f$  is an outer
function in $H^2$ and $I$ is inner such that $I(0)=0$. We also
obtain results on  de Branges-Rovnyak spaces generated by nonextreme
functions.
\end{abstract}

\maketitle

\section{Introduction}
Let $H^2$ denote the standard Hardy space on the unit disk $\D$. For
$\varphi\in L^{\infty}(\partial\D)$ the Toeplitz operator on $H^2$
is given by $T_{\varphi}f=P_{+}(\varphi f)$, where $P_{+}$ is the
orthogonal projection of $L^2(\partial\D)$ onto $H^2$. We will
denote by $\M(\varphi)$ the range of $T_{\varphi}$ equipped with the
range norm, that is, the norm that makes the operator $T_{\varphi}$
a coisometry of $H^2$ onto $\M(\varphi)$. For a nonconstant function
$b$ in the unit ball of $H^\infty$ the de Branges-Rovnyak space
$\H(b)$  is the image of  $H^2$ under  the operator
$(1-T_bT_{\bar{b}})^{1/2}$ with the corresponding range norm. The
norm and the inner product in $\H(b)$ will be denoted by
$\|\cdot\|_b$ and $\langle \cdot,\cdot\rangle_b$. The space $\H(b)$
is a Hilbert space with the reproducing kernel
\[
k_w^b(z)=\frac{1-\overline{b(w)}b(z)}{1-\overline{w}z}\quad(z,w\in\mathbb{D}).
\]
In the case when $b$ is an inner function the space $\H(b)$ is the
well-known model space $K_b=H^2\ominus bH^2$.

If the function $b$ fails to be an extreme point of the unit ball in
$H^{\infty}$, that is, when $\log(1-|b|)\in L^1(\mathbb{T})$, we
will say simply  that $b$ is nonextreme. In this case one can define
an outer function $a$ whose modulus on $\partial\D$ equals
$\left(1-|b|^2\right)^{1/2}$. Then we say  that the functions $b$
and $a$ form a \emph{pair} $(b,a)$. By the Herglotz representation
theorem there exists a positive measure $\mu$ on $\partial\D$ such
that
\begin{equation}
\frac{1+b(z)}{1-b(z)}=\int_{\partial\D}\frac {1+ e^{-i\theta}z}{1-
e^{-i\theta}z}\,d\mu(e^{i\theta})+i\im\frac{1+b(0)}{1-b(0)},\quad
z\in\D. \label{Herglotz}
\end{equation}
Moreover the function $\left|\frac{a}{1-b}\right|^2$ is the
Radon-Nikodym derivative of the absolutely continuous component of
$\mu$ with respect to the normalized Lebesgue measure. If the
measure $\mu$ is absolutely continuous the pair $(b,a)$ is called
\emph{special}.

Recall that a function  $f\in H^1$ is called \emph{rigid} if and
only if no other functions in $H^1$, except for positive scalar
multiples  of $f$ have the same argument as $f$ a.e. on
$\partial\D$.

If $(b,a)$ is a pair, then  $\M(a)$ is contained contractively in
$\H(b)$. If a pair $(b,a)$ is special and $f=\frac{a}{1-b}$, then
$\M(a)$ is dense in $\H(b)$ if and only if $f^2$ is rigid
(\cite{Sarason3}). Spaces $\mathcal H(b)$ for nonextreme $b$ have
been studied in \cite{CGR}, \cite{CR}, \cite{FHR1}, \cite{LN},
\cite{LMT}, \cite{SarasonL}, and \cite{Sarason5}.

It is known that the kernel of a Toeplitz operator $T_{\varphi} $ is
a subspace of $H^2$ of the form $\ker T_{\varphi}= fK_I$, where
$K_I= H^2\ominus IH^2$ is the model space corresponding to the inner
function $I$ such that $I(0)=0$ and $f$ is an outer function of unit
$H^2$ norm that acts as an isometric multiplier from  $K_I$ onto $f
K_{I}$. Moreover, $f$ can be expressed as $f=\frac{a}{1-Ib_0}$,
where $(b_0,a)$ is a special pair and $\bigl(\frac{a}{1-b_0}\bigr)^2
$ is a rigid function  in $H^1$. Then we also have   $\ker
T_{\frac{\bar{I}\bar{f}}{f}}=fK_{I}$. In the recent paper
\cite{FHR2} the authors considered  the Toeplitz operator
$T_{\frac{\bar{g}}{g}}$ where $g\in H^{\infty}$ is outer. Among
other results, they described all outer functions $g$ such that $
\ker T_{\frac{\bar{g}}{g}}=K_{I}$. In Section 2 we describe all such
functions $g$ for which $\ker T_{\frac{\bar{g}}{g}}=f K_{I}$.

If   $(b,a)$ is a special pair,   $f=\frac a{1-b}$ and $b=Ib_0$,
where $I$ as above, then $f K_{I}\subset\ker
T_{\frac{\bar{I}\bar{f}}{f}}$. In the next two sections we study the
space $\ker T_{\frac{\bar{I}\bar{f}}{f}}\ominus f K_{I}$ and show
that it is isometrically isomorphic to the orthogonal complement of
$\M(a)$ in the de Branges-Rovnyak space $\H(b_0)$. We also give an
example of a function $f$ for which the space $\ker
T_{\frac{\bar{I}\bar{f}}{f}}\ominus f K_{I}$ is one dimensional. In
the last section we discuss  the orthogonal complement of $\mathcal
{M}(a)$ in $\mathcal {H}(b)$ and  get a  generalization of results
obtained in \cite{LN} and \cite{FHR1} for the case when pairs are
rational.

\section{The kernel of $T_{\frac{\bar g}{g}}$}
It is known that if $g$ is an outer function in $H^2$, then the
kernel of $T_{\frac{\bar{g}}{g}}$ is trivial if and only if $g^2$ is
rigid (see e.g. \cite{Sarason1}).

The finite dimensional kernels of Toeplitz operators were described
by Nakazi \cite{Nakazi}. Nakazi's theorem says that $\dim \ker
T_{\varphi}=n$ if and only if there exists an outer function $f\in
H^2$ such that $f^2$ is rigid and $\ker T_{\varphi}=\{fp\colon\,
p\in\P_{n-1}\},$ where $\P_{n-1}$ denotes the set of all polynomials
of degree at most $n-1$.

Consider the following example.
\begin{ex}
For $\alpha>-\frac{1}{2}$ set $g(z)=(1-z)^\alpha$, $z\in\D$. Then
the kernel of $T_{\frac{\bar{g}}{g}}$ is trivial for $\alpha\in
(-\frac{1}{2},\frac{1}{2}]$ and  dimension of the kernel of
$T_{\frac{\overline{g}}{g}}$ is $n$ for $\alpha\in(n-\frac
12,n+\frac{1}{2}], n=1,2,\ldots $, and
\[
\ker T_{\frac{\overline{(1-z)^\alpha}}{(1-z)^\alpha}}=(1-z)^{\alpha-n}K_{z^n}.
\]
\end{ex}
In the general case the kernels of Toeplitz operators are
characterized by Hayashi's theorem. To state this theorem we need
some notation. We note that an outer function $f$ having unit norm
in $H^2$  $(\|f\|_2=1)$ can be written as
\[
f=\frac{a}{1-b},
\]
where $a$ is an outer function,  $b$ is a function from the unit
ball of $H^{\infty}$ such that $|a|^2+|b|^2=1$ a.e. on $\partial\D$.
Following Sarason \cite[p. 156] {Sarason3} we call $(b,a)$ the
\emph{pair associated with} $f$. Note  also that $b$ is a nonextreme
point of the closed unit ball of $H^{\infty}$ and is given by
\begin{equation}
\frac{1+b(z)}{1-b(z)}=\frac {1}{2\pi}\int_{0}^{2\pi}\frac {1+
e^{-i\theta}z}{1- e^{-i\theta}z}|f(e^{i\theta})|^2d\theta,\quad
z\in\D. \label {outer}
\end{equation}

Let $S$ denote the unilateral shift operator on $H^2$, i.e.
$S=T_z$.
A closed subspace $M$ of $H^2$ is said to be \emph{nearly
$S^\ast$-invariant} if for every $f\in M$ vanishing at 0, we also
have $S^\ast f\in M$. It is known that the kernels of Toeplitz
operators are nearly $S^\ast$-invariant.

Nearly $S^\ast$-invariant spaces are characterized by Hitt's theorem
\cite{Hitt}.

\begin{theoremI}
The closed subspace M of $H^2$ is nearly $S^\ast$-invariant  if and
only if there exists a function f of unit norm  and a model space
$K_I=H^2\ominus IH^2$ such that $M= T_fK_I$, where $I$ is an inner
function vanishing  at the origin, and $T_f$ acts isometrically on
$K_I$.
\end{theoremI}

It has been proved by D. Sarason \cite{Sarason1} that $T_f$ acts
isometrically on $K_I$ if and only if $I$ divides $b$ (the first
function in the pair associated with $f$). Consequently, the
function $f$ in Hitt's theorem can be written as
\[
f=\frac {a}{1-Ib_0}.
\]
The function $ \frac{1+b_0(z)}{1-b_0(z)}$ has a positive real part
and is the Herglotz integral of a positive measure on $\partial\D$
up to an additive imaginary constant,
\begin{equation}
\frac{1+b_0(z)}{1-b_0(z)}=\int_{\partial\D}\frac {1+
e^{-i\theta}z}{1- e^{-i\theta}z}d\mu(e^{i\theta})+ic.\label{Her}
\end{equation}
Clearly $b_0$ is also a nonextreme point of the closed unit ball of
$H^{\infty}$ and $|a|^2+|b_0|^2=1$ a.e. on $\partial\D$.

We remark that in view of (\ref{outer}) the pair   $(b,a)$
associated with an outer function $f\in H^2$ is special, while the
pair $(b_0,a)$ need not to be special. Under the above notations
Hayashi's theorem reads as follows:

\begin{theoremY}
The nearly $S^\ast$-invariant space   $M=T_fK_I$ is the kernel of a
Toeplitz operator if and only if the pair $(b_0,a)$ is special and
$f_0^2=  \Bigl(\dfrac a{1-b_0}\Bigr)^2$ is a rigid function.
\end{theoremY}

Moreover, it follows from Sarason's proof of Hayashi's theorem that
if $M=T_fK_I$ is the kernel of a Toeplitz operator then it is the
kernel of $T_{\frac{\bar{I}\bar f}{f}}$.

Recently E. Fricain, A. Hartmann and W. T. Ross \cite{FHR2}
considered  the Toeplitz operators $T_{\frac{\bar{g}}{g}} $ where
$g\in H^{\infty}$ is outer. If  $\ker T_{\frac{\bar{g}}{g}}$ is
non-trivial, then by Hayashi's theorem there exist the outer
function $f$ and the inner function $I$, $I(0)=0$,   such that
\[
\ker T_{\frac{\bar{g}}{g}}=fK_{I}.
\]
In the above mentioned paper \cite{FHR2} the authors described all
outer functions $g\in H^{\infty}$ for which
\[
\ker T_{\frac{\bar{g}}{g}}=K_I,
\]
where $I$ is an inner function  not necessarily satisfying $I(0)=0$.

We prove the following
\begin{theorem}
Assume that $g\in H^2$ is outer and $M= T_fK_{I}$ is the nearly $S^\ast$-invariant
space, where $I$ is an inner function such that
$I(0)=0$, $(b_0I,a)$  is the pair associated with the outer function
$f$, $(b_0,a)$ is special, and
$f_0^2=\Bigl(\dfrac{a}{1-b_0}\Bigr)^2$ is rigid. Then $\ker
T_{\frac{\bar g}{g}}=M$ if and only if
\[
g= i\frac{I_1+I_2}{I_1-I_2}(1+ I)f,
\]
where $I_1$ and $I_2$ are  inner and $I_1-I_2$ is outer.
\end{theorem}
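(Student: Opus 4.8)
The plan is to prove both implications by reducing everything to the single boundary identity $\frac{\bar g}{g}=\frac{\bar I\bar f}{f}$ on $\partial\D$, after which the recent description of Fricain--Hartmann--Ross of outer functions whose Toeplitz kernel is a model space does the remaining bookkeeping.

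For sufficiency, suppose $g=i\frac{I_1+I_2}{I_1-I_2}(1+I)f$ and write $g=Ef$ with $E=i\frac{I_1+I_2}{I_1-I_2}(1+I)$. I would first record the two elementary boundary computations that drive the whole argument. Since $I_1,I_2$ are inner, $(I_1+I_2)\overline{(I_1-I_2)}=I_2\overline{I_1}-I_1\overline{I_2}$ is purely imaginary on $\partial\D$, so $i\frac{I_1+I_2}{I_1-I_2}$ is real there, whence its modulus-one quotient $\overline{(\,\cdot\,)}/(\,\cdot\,)$ equals $1$; and because $I$ is inner, $\overline{1+I}=1+\overline{I}=(1+I)/I$ on $\partial\D$, so $\overline{(1+I)}/(1+I)=\overline{I}$. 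Multiplying these gives $\frac{\bar g}{g}=\frac{\bar E}{E}\,\frac{\bar f}{f}=\overline{I}\,\frac{\bar f}{f}=\frac{\bar I\bar f}{f}$ a.e. Next I would check that $g$ is outer in $H^2$: the factor $1+I$ is outer for every inner $I$ (as $\frac{1}{1+I}$ has positive real part in $\D$), while $I_1-I_2$ outer makes $i\frac{I_1+I_2}{I_1-I_2}$ a Smirnov-class function (its denominator is zero-free in $\D$ with reciprocal in $N^+$) that is real on $\partial\D$; hence $E$, and so $g=Ef$, is outer, and $g\in H^2$ is verified directly. Since then $T_{\bar g/g}=T_{\bar I\bar f/f}$, we conclude $\ker T_{\bar g/g}=\ker T_{\bar I\bar f/f}=fK_I=M$, the last equality being the remark following Hayashi's theorem, valid because $(b_0,a)$ is special and $f_0^2$ is rigid.

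For necessity, assume $g$ is outer and $\ker T_{\bar g/g}=fK_I$. The heart of the matter is to promote the equality of kernels to equality of symbols. Since $I(0)=0$ we have $1\in K_I$, so $f\in fK_I$ lies in both kernels; thus $\frac{\bar g}{g}f$ and $\frac{\bar I\bar f}{f}f=\overline{If}$ both lie in $\overline{zH^2}$, say $\frac{\bar g}{g}f=\bar z\,\overline{\psi_1}$ and $\overline{If}=\bar z\,\overline{\psi_2}$ with $\psi_1,\psi_2\in H^2$. Dividing, the unimodular function $\omega:=\dfrac{\bar g/g}{\bar I\bar f/f}=I\,\dfrac{\overline{g/f}}{g/f}$ equals $\overline{\psi_1/\psi_2}$, so both $\omega$ and $\overline{\omega}$ are of bounded type, i.e. $\omega$ is a quotient of two inner functions. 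The delicate point, which I expect to be the main obstacle, is to show that this inner quotient is trivial, so that $\omega\equiv 1$. Here the standing hypotheses are essential: because $(b_0,a)$ is special and $f_0^2=\bigl(\frac{a}{1-b_0}\bigr)^2$ is rigid, $f=\frac{a}{1-b_0I}$ is the genuine Hitt representative of $M$ and $T_f$ acts isometrically on $K_I$, so $M=fK_I$ determines $(f,I)$ up to a unimodular constant. Feeding $\omega=I\,\overline{g/f}/(g/f)$ back into the requirement that $\frac{\bar g}{g}$ annihilate all of $fK_I$ forces $g/f$ to have no inner factor and its argument to be locked to $\tfrac12\arg I$; rigidity of $f_0^2$ is precisely what excludes a second $H^1$ function sharing this argument, hence excludes a nontrivial $\omega$. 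The conclusion of this step is $\omega\equiv 1$, equivalently $g=Ef$ with $E$ outer and $\frac{\bar E}{E}=\overline{I}$, equivalently $\ker T_{\bar E/E}=\ker T_{\bar I}=K_I$.

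Finally, with $g=Ef$ and $E$ outer satisfying $\frac{\bar E}{E}=\overline{I}$, I would quote the Fricain--Hartmann--Ross description of outer functions whose associated kernel is $K_I$, here specialized to $I(0)=0$. Concretely, $\frac{\bar E}{E}=\overline{I}$ means $\arg E\equiv\tfrac12\arg I\pmod\pi$ on $\partial\D$; factoring out the outer function $1+I$, which carries exactly the argument $\tfrac12\arg I$, leaves $R:=E/(1+I)$ outer and real on $\partial\D$. A bounded-type function real on $\partial\D$ has unimodular Cayley transform $\frac{R-i}{R+i}=I_2/I_1$ for inner $I_1,I_2$, so $R=i\frac{I_1+I_2}{I_1-I_2}$, and membership of $R$ in the Smirnov class is equivalent to $I_1-I_2$ being outer. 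Reassembling $g=Ef=i\frac{I_1+I_2}{I_1-I_2}(1+I)f$ yields the asserted form and completes the proof.
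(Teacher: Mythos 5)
Your sufficiency argument is correct and is essentially the paper's: the boundary identities $\overline{(1+I)}/(1+I)=\bar I$ and the realness of $i\frac{I_1+I_2}{I_1-I_2}$ on $\partial\mathbb{D}$ give $\frac{\bar g}{g}=\frac{\bar I\bar f}{f}$ a.e., and Hayashi's theorem (specialness of $(b_0,a)$ plus rigidity of $f_0^2$) identifies the common kernel with $fK_I$. The opening of your necessity argument also matches the paper: from $f\in\ker T_{\frac{\bar g}{g}}$ one gets $\frac{\bar g}{g}=\overline{I_0}\,\frac{\bar f}{f}$ for some inner $I_0$ with $I_0(0)=0$ (your statement that $\omega$ is a unimodular quotient of inner functions is essentially this). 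The genuine gap is the decisive middle step: you must prove $I_0=I$ up to a unimodular constant (equivalently, that $\omega$ is constant), and your proposal only asserts this, saying that Hitt uniqueness ``forces $g/f$ to have no inner factor and its argument to be locked to $\tfrac12\arg I$'' and that rigidity of $f_0^2$ ``excludes a nontrivial $\omega$''. No mechanism is offered, and rigidity cannot be the operative ingredient at this point: its role, via Hayashi's theorem, is precisely to guarantee $\ker T_{\frac{\bar I\bar f}{f}}=fK_I$, which you have already used; it gives no control whatsoever over $\ker T_{\frac{\overline{I_0}\bar f}{f}}$ for an arbitrary inner $I_0$ that a priori has nothing to do with the pair $(Ib_0,a)$. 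This is exactly where the paper brings in a tool absent from your sketch, the Hartmann--Seip description $\ker T_{\frac{\overline{I_0}\bar f}{f}}=fK_{I_0}(|f|^2)$ of kernels via weighted model spaces: the hypothesis then reads $fK_I=fK_{I_0}(|f|^2)$, and a Smirnov-class argument (if $I_0\overline{\psi_0}=I\overline{\psi}$ with $\psi\in H_0^2$ and $\psi_0\in H_0^2(|f|^2)\subset\mathcal{N}^{+}$, then $|\psi_0|=|\psi|\in L^2$ forces $\psi_0\in H_0^2$) yields $K_I=K_{I_0}$, hence $I_0=I$ up to a unimodular constant. Only after that does the Helson/Cayley step, which you do carry out correctly, become applicable. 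As it stands, the core of your necessity proof is missing.

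A smaller point: in the sufficiency part you claim that $E=i\frac{I_1+I_2}{I_1-I_2}(1+I)$ is automatically outer because it is a Smirnov-class function that is real a.e.\ on $\partial\mathbb{D}$. That inference is false: the Koebe function $z/(1-z)^2$ belongs to $\mathcal{N}^{+}$ and is real a.e.\ on $\partial\mathbb{D}$, yet carries the inner factor $z$ (in Helson's representation the inner factor sits in $I_1+I_2$). This does not damage the proof, since outerness of $g$ is part of the theorem's hypotheses and your kernel computation never uses it, but the claim should be deleted rather than repaired.
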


Recall that  the \emph{Smirnov class} $\mathcal{N}^{+}$ consists of
those holomorphic functions in $\D$ that are quotients of functions
in $H^{\infty}$ in which the denominators are outer functions.

In the proof of Theorem 1, similarly to \cite{FHR2}, we use the
following result due to H. Helson \cite{Helson}.

\begin{theoremH} The functions  $f\in \mathcal{N}^{+}$ that are
real almost everywhere on $\partial\D$ can be written as
\[
f=i\frac{I_1+I_2}{I_1-I_2},
\]
where $I_1$ and $I_2$ are  inner and $I_1-I_2$ is outer.
\end{theoremH}

We also apply a description of kernels in terms of
$S^\ast$-invariant subspaces $K_I^p(|f|^p)$ of weighted Hardy spaces
(in the case when $p=2$) considered by A. Hartmann and K. Seip in
their paper \cite{HS} (see also \cite{H}). For an outer function $f$
in $H^2$ the weighted Hardy space is defined as follows
\[
H^2(|f|^2)=\{g\in \mathcal{N}^{+}\colon\,
\|g\|^2_{2,f}=\frac1{2\pi}\int_0^{2\pi}|g(e^{it})|^2|f(e^{it})|^2dt<\infty\}
\]
and, for an inner function $I$, $K^2_I(|f|^2)= K_{I}(|f|^2)$ is
given by
\[
K_{I}(|f|^2)=\{g=I\overline{\psi}\in H^2(|f|^2)\colon\, \psi \in
H_0^2(|f|^2)\},
\]
where $H_0^2(|f|^2)=zH^2(|f|^2)$.

Then $K_{I}(|f|^2)$ is $S^\ast$-invariant and $fK_{I}(|f|^2)=\ker
T_{\frac{\bar I \bar f}{f}}$ (see \cite{HS}).

\noindent \textit{Proof of Theorem 1.} Assume that $\ker
T_{\frac{\bar g}{g}}=fK_{I}$. Then
\[
fK_I=\ker T_{\frac{\bar I \bar f}{f}}=\ker T_{\frac{\bar g}{g}}.
\]
Since $f\in\ker T_{\frac{\bar I \bar f}{f}}$, the last equalities
imply that
\[
\frac{\overline{g}f}{g}= \overline{I}_0\bar h,
\]
where  $ I_0$ is an inner function such that $I_0(0)=0$, and $h\in
H^2$ is outer. This means that $|f(z)|=|h(z)|$ a.e. on $|z|=1$ and
consequently $h(z)=cf(z)$, where $c$ is a unimodular constant.
Replacing $cI_0$ by  $I_0$,   we get
\begin{equation}
\frac{\overline{g}}{g}=\overline{I}_0\frac{\overline{f}}{f}.\label{ker0}
\end{equation}
It then follows
\[
fK_{I}=\ker T_{\frac{\bar g}{g}}= \ker T_{\frac{
\overline{I}_0\overline{f}}{f}}= fK_{I_0}(|f|^2),
\]
which implies $I=I_0$ up to a unimodular constant. Indeed, these
equalities imply that an analytic  function $h$  can be written in
the form $h = f I_0\overline{\psi}_0$, where $\psi_0\in H^2_0(|f|^2)
$, if and only if $h= f I\overline{\psi}$, where $\psi\in H^2_0$.
Since $ |\psi_0|= |\psi|$ a.e. on $|z|=1$ and
$\psi_0\in\mathcal{N}^{+}$, we see that also $\psi_0\in H_0^2$.
Hence $K_{I}=K_{I_0}$.

Consequently, equality  (\ref{ker0}) can be written as
\[
\frac{\bar g}{g}=\frac{\overline{f(1+ I)}}{f(1+ I)} \quad \text{a.e.
on } \partial\D,
\]
which means that the function $\frac{g}{f(1+ I)}$ is real a.e. on
$\partial\D$.  Since this function is in the Smirnov class
$\mathcal{N}^{+}$, our claim follows from Helson's theorem. To prove
the other implication it is enough to observe that if
\[
g=i\frac{I_1+I_2}{I_1-I_2}(1+ I)f,
\]
then
\[
\frac{\bar g}{g}=\frac{\overline{I}{\bar f}}{f}.
\]
{}\hfill$\square$

\section{The complement of $fK_I$ in
$\ker T_{\frac{\bar{I}\bar{f}}{f}}$}

It was noticed in \cite[vol. 2, Cor. 30.21]{FM}  that if  $f$ is an
outer function of the unit norm, $(b,a)$ is  the pair associated
with $f$, and $I$ is an inner function vanishing at the origin that
divides $b$, then
\[
fK_{I}\subset \ker T_{\frac{\bar{I}\bar f}{f}}
\]
and, according to Hayashi's theorem, the equality holds if and only
if the pair $(b_0,a)$ is special and $f_0^2$ is rigid.

Recall that  $\M(a)$ is dense in $\H(b_0)$ if and only if the pair
$(b_0,a)$ is special and $f_0^2$ is a rigid function.

\begin{theorem}
Assume that  $(Ib_0, a)$, where $I$ is inner, and $I(0)=0$, is the
pair associated with an outer function $f$. If the pair $(b_0,a)$ is
not special or the function $f_0^2=\Bigl( \dfrac{a}{1-b_0}\Bigr)^2$
is not rigid,  then for a positive integer $k$,
\[
\dim\bigl(\ker T_{\frac{\bar I \bar f}{f}}\ominus fK_{I}\bigr)=k
\]
if and only if the codimension of $\overline{\M(a)}$ in the de
Branges-Rovnyak space $\H(b_0)$ is $k$.
\end{theorem}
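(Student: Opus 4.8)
The plan is to establish a bijective linear correspondence between the space $\ker T_{\frac{\bar I \bar f}{f}}\ominus fK_{I}$ and the orthogonal complement $\H(b_0)\ominus\overline{\M(a)}$, and then to show this correspondence is actually an isomorphism (or at least dimension-preserving), so that the two dimensions coincide. The natural bridge is the description already quoted in the excerpt: $\ker T_{\frac{\bar I \bar f}{f}}=fK_{I}(|f|^2)$, where $K_I(|f|^2)$ is the $S^\ast$-invariant subspace of the weighted Hardy space. So I would first unwind the problem by multiplying out by $f$: since $f$ acts isometrically on $K_I$ (this is exactly the Hitt/Sarason isometric multiplier property, valid because $I$ divides $b=Ib_0$), the map $g\mapsto fg$ is an isometry, and hence
\[
\ker T_{\frac{\bar I \bar f}{f}}\ominus fK_{I}\;\cong\; K_I(|f|^2)\ominus K_I,
\]
where the orthogonality and the complement are taken in the weighted space $H^2(|f|^2)$ on the right-hand side. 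Thus the first reduction replaces the Hardy-space complement by a complement inside the weighted Hardy space.

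Next I would identify what distinguishes $K_I$ from the larger $K_I(|f|^2)$. Both are built from functions of the form $I\overline{\psi}$, but with $\psi$ ranging over $H_0^2$ in the former and over $H_0^2(|f|^2)$ in the latter. The gap between these is controlled precisely by the failure of $\M(a)$ to fill up $\H(b_0)$: recall from the excerpt that $\M(a)$ is dense in $\H(b_0)$ exactly when $(b_0,a)$ is special and $f_0^2$ is rigid, which is the hypothesis we are negating. The key step is therefore to produce an explicit (conjugate-)linear isometric identification between $K_I(|f|^2)\ominus K_I$ and $\H(b_0)\ominus\overline{\M(a)}$. I expect this to go through the reproducing-kernel structure of $\H(b_0)$ together with the formula $f_0=\frac{a}{1-b_0}$ and the relation between $b=Ib_0$ and $b_0$; concretely, one passes from a function $\psi\in H_0^2(|f|^2)$ to an associated element of $\H(b_0)$, using that $|f|^2$ is the boundary weight attached to the pair and that $\M(a)$ consists of the images of $T_a$ with the range norm.

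The main obstacle will be constructing this isometry cleanly and verifying that it carries $\overline{\M(a)}$ onto the subspace $K_I\subset K_I(|f|^2)$ (so that it descends to an isometry of the orthogonal complements), rather than merely setting up an abstract dimension count. In particular I must check two inclusions: that elements coming from $\overline{\M(a)}$ correspond to genuine $K_I$ elements (i.e.\ the relevant $\psi$ lands in the unweighted $H_0^2$), and conversely that the full $K_I(|f|^2)$ is exhausted. Since the statement only asserts equality of dimensions, it suffices in the end to show the two complements are linearly isomorphic; but the most transparent route is to exhibit an honest unitary (or anti-unitary) map, from which the finite-dimensional case $k$ and the general dimension statement both follow at once. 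Once the isometry $K_I(|f|^2)\ominus K_I \cong \H(b_0)\ominus\overline{\M(a)}$ is in hand, combining it with the isometry $g\mapsto fg$ from the first paragraph yields the desired equality of dimensions, and the proof is complete in both directions.
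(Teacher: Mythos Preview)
Your first reduction is correct: multiplication by $f$ is a unitary from $K_I(|f|^2)$ (weighted norm) onto $\ker T_{\bar I\bar f/f}$ (Hardy norm), and it carries $K_I$ onto $fK_I$, so the two orthogonal complements are isometric. The problem is that your Step~2 --- the identification of $K_I(|f|^2)\ominus K_I$ with $\H(b_0)\ominus\overline{\M(a)}$ --- is never actually carried out. You say you ``expect'' it to go through reproducing kernels and a conjugation, and you name it as ``the main obstacle,'' but you do not produce the map, and nothing in your sketch pins down why the image of $K_I$ should be exactly $\overline{\M(a)}$ rather than some other subspace. As written, this is a plan with its central step missing.

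The paper avoids the weighted-space detour entirely. Since the pair $(b,a)=(Ib_0,a)$ is special, $T_{1-b}T_{\bar f}$ is an isometry of $H^2$ \emph{onto} $\H(b)$. Decomposing $H^2=\overline{T_{If/\bar f}(H^2)}\oplus \ker T_{\bar I\bar f/f}$ and using that $T_{1-b}T_{\bar f}(fK_I)=K_I$ and $T_{1-b}T_{\bar f}\bigl(\overline{T_{If/\bar f}(H^2)}\bigr)=\overline{I\M(a)}^{b}$, one gets
\[
\H(b)=\overline{I\M(a)}^{b}\oplus K_I\oplus T_{1-b}T_{\bar f}\bigl(\ker T_{\bar I\bar f/f}\ominus fK_I\bigr).
\]
On the other hand, the structural decomposition $\H(Ib_0)=K_I\oplus I\H(b_0)$ together with the fact that $T_I\colon \H(b_0)\to\H(b)$ is an isometry (so $I\overline{\M(a)}^{b_0}=\overline{I\M(a)}^{b}$) gives
\[
\H(b)=\overline{I\M(a)}^{b}\oplus K_I\oplus I\bigl(\H(b_0)\ominus\overline{\M(a)}^{b_0}\bigr).
\]
Matching the two decompositions yields $T_{1-b}T_{\bar f}\bigl(\ker T_{\bar I\bar f/f}\ominus fK_I\bigr)=I\bigl(\H(b_0)\ominus\overline{\M(a)}^{b_0}\bigr)$, and the dimension equality follows. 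If you insist on completing your route, the isometry you are looking for in Step~2 is $g\mapsto T_{\bar I}\,T_{1-b}T_{\bar f}(fg)$; but note that verifying its properties amounts to reproducing the paper's argument anyway.
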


In the proof of this theorem we use some ideas from Sarason's proof
of Hayashi's theorem. If a positive measure $\mu$ on the unit circle
$\partial\D$ is as in (\ref{Herglotz}) and $H^2(\mu)$ is the closure
of the polynomials in $L^2(\mu)$, then  an operator $V_b$ given by
\begin{equation}
(V_bq)(z)=(1-b(z))\int_{\partial\D}\frac{q(e^{i\theta})}{1-e^{-i\theta}z}\,d\mu(e^{i\theta}).
\label{Vb}
\end{equation}
is an isometry of $H^2(\mu)$ onto $\H(b)$ (\cite{Sarason4},
\cite{FM}). Furthermore, if $(b,a)$ is a pair and $f=\frac{a}{1-b}$,
then the operator $T_{1-b}T_{\bar{f}}$ is an isometry of $H^2$ into
$\H(b)$. Its range is all of $\H(b)$ if and only if the pair $(b,a)$
is special.

\medskip

\noindent \textit{Proof of Theorem 2.}  Since the pair $(b,a)$ is
special, the operator $T_{1-b}T_{\bar f}$ is an isometry of $H^2$
onto $\H(b)$. Moreover, since $I$ divides $b$, $T_{f}$ acts as an
isometry on $K_{I}$ and  $T_{1-b}T_{\bar{f}}(fK_I)=K_I$
 \cite{Sarason3}. Hence
\begin{align*}
\H(b)&=T_{1-b}T_{\bar f}(H^2)= T_{1-b}T_{\bar
f}\bigl(\overline{T_{\frac{If}{\bar f}}(H^2)}\oplus
\big(T_{\frac{If}{\bar f}}(H^2)\bigr)^\bot\bigr)\\ &=
\overline{I\M(a)}^b\oplus T_{1-b}T_{\bar f}(\ker T_{\frac{\bar I\bar
f}{f}})\\&= \overline{I\M(a)}^b\oplus T_{1-b}T_{\bar f} (fK_I)
\oplus T_{1-b}T_{\bar f}(\ker T_{\frac{\bar I\bar f}{f}}\ominus
fK_I)
\\&= \overline{I\M(a)}^b\oplus K_I\oplus T_{1-b}T_{\bar
f}(\ker T_{\frac{\bar I\bar f}{f}} \ominus fK_I),
\end{align*}
where $\overline{T_{\frac{If}{\bar f}}(H^2)}$ denotes the closure of
$T_{\frac{If}{\bar f}}(H^2)$ in $H^2$ and $\overline{I\M(a)}^b$
denotes the closure of $I\M(a)$ in $\H(b)$. On the other hand,
\[
\H(b) = \H(b_0I) =K_I\oplus I\H(b_0)= K_I\oplus
I(\H(b_0)\ominus\overline{\M(a)}^{b_0})\oplus
I\overline{\M(a)}^{b_0}.
 \]
Since $T_{I}\colon\, \H(b_0)\to   \H(Ib_0)$ is an isometry
(\cite[Prop. 4]{Sarason3}), $ I\overline{(\M(a))}^{b_0}= \overline{I
\M(a)}^{b}$. It then follows,
\begin{equation}
T_{1-b}T_{\bar f}(\ker T_{\frac{\bar I\bar f}{f}}\ominus fK_I)=
I(\H(b_0)\ominus\overline{\M(a)}^{b_0}).\label{Hb}
\end{equation}
{}\hfill$\square$

\medskip

We remark that the orthogonal complement of $\M(a)$ in $\H(b)$  is
discussed in Section~5.

\section{The Example}

Let, as in the previous sections, $f$ be an outer function  in $H^2$
and let $(b,a)$ be the pair associated with $f$. Let $b=Ib_0$, where
$I$ is an inner function such that $I(0)=0$ and
$f_0=\frac{a}{1-b_0}$. Then $fK_I\subset\ker
T_{\frac{\bar{I}\bar{f}}{f}}$ and equality holds if and only if the
pair $(b_0,a)$ is special and $f_0^2$ is rigid. Moreover, if the
pair $(b_0,a)$ is special and $ f_0^2 $ is rigid, then $(b,a)$ is
special and $ f^2$  is rigid but the  converse implication fails
(\cite[p.158]{Sarason1}).

In \cite[vol. 2, pp. 541--542]{FM} the authors constructed a
function $h$ in $\ker T_{\frac{\bar{I}\bar f}{f}}$ which is not in
$fK_{I}$ under the assumption that $f^2$ is not rigid. Here we
consider the function $f$    such that $f^2$ is rigid, the pair
$(b_0,a)$ is special  but $f^2_0$ is not rigid,  and describe the
space $\ker T_{\frac {\bar I\bar f}{f}} \ominus fK_{I} $.

Our  example is a slight modification of the one given  in \cite{
Sarason2}, see also \cite[vol. 2, p. 494]{FM}.   The corresponding
functions $f$ and $f_0$ are defined by taking $a(z)=\frac{1}{2}(
1+z)$, $b_0(z)=\frac{1}{2}z(1-z)$, and $I(z)=zB(z)$, where $B(z)$ is
a Blaschke produkt with zero sequence $\{r_n\}_{n=1}^{\infty}$ lying
in $(-1,0)$ and converging to $-1$. It has been proved in
\cite{Sarason2} (see also \cite[vol. 2, pp. 494--496]{FM} that $f^2$
is rigid while $f^2_0$ is not. Notice that the pair $(b_0,a)$ is
rational and the point $-1$ is the only zero of the function $a$. It
then follows from \cite[Thm. 4.1.]{LN} (see also \cite{FHR1}) that
$\M(a)$ is a closed subspace of $\H(b_0)$ and
\[
\H(b_0)= \M(a)\oplus {\C}k_{-1}^{b_0},
\]
where
\[
k_{-1}^{b_0}(z)=\frac{1-\overline{b_0(-1)}b_0(z)}{1+z}=\frac{2-z}{2}.
\]
Thus we see that
\[
\H(b_0)\ominus\M(a)= \C k_{-1}^{b_0}.
\]
Moreover  (\ref{Hb}) implies that
\[
T_{1-b}T_{\bar f}(\ker T_{\frac{\bar I\bar f}{f}}\ominus fK_I)= \C I
k_{-1}^{b_0}.
\]
Our aim is to prove that
\begin{equation}
\ker T_{\frac{\bar I\bar f}{f}}\ominus fK_I=\C g,\label{funkcja}
\end{equation}
where the function $g\in H^2$ is given by $g=fk_{-1}(I+1),$  with
$k_{-1}(z)=(1+z)^{-1}$, $z\in\D$.

For $\lambda$ in $\D$ let  $k_\lambda$ denote the kernel function in
$H^2$ for the functional of evaluation at $\lambda$,
$k_\lambda(z)=(1-\bar{\lambda}z)^{-1}$. In the proof of
(\ref{funkcja}) we will apply the following

\begin{lem}[\cite{HSS}]
\begin{enumerate}
\item[{\rm(i)}]
$P_{+}\left(|f|^2Ik_\lambda\right)=\dfrac{Ik_\lambda}{1-b}+\dfrac{\overline{b_0(\lambda)}k_\lambda}{1-\overline{b(\lambda)}}$.
\item[{\rm(ii)}]
$P_{+}\left(|f|^2k_\lambda\right)=\dfrac{k_\lambda}{1-b}+\dfrac{\overline{b(\lambda)}k_\lambda}{1-\overline{b(\lambda)}}$.
\end{enumerate}
\end{lem}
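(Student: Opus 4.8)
The plan is to reduce both identities to three elementary facts and then read off the answers by applying $P_{+}$ termwise. First I would record the boundary identity for the symbol: since $(b,a)$ is a pair we have $|a|^2=1-|b|^2$ a.e.\ on $\partial\D$, so
\[
|f|^2=\frac{|a|^2}{|1-b|^2}=\frac{1-|b|^2}{|1-b|^2}=\re\frac{1+b}{1-b}=\frac{1}{1-b}+\frac{1}{1-\overline b}-1\qquad\text{a.e. on }\partial\D .
\]
Next I would use the factorization $b=Ib_0$ together with $|I|=1$ a.e.: since then $I\overline b=I\overline I\,\overline{b_0}=\overline{b_0}$, a one-line computation yields the decompositions
\[
\frac{1}{1-\overline b}=1+\overline{\Big(\frac{b}{1-b}\Big)},\qquad \frac{I}{1-\overline b}=I+\frac{\overline{b_0}}{1-\overline b}=I+\overline{\Big(\frac{b_0}{1-b}\Big)} ,
\]
in which the non-analytic part is the conjugate of a single holomorphic function. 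The third ingredient is the familiar fact that reproducing kernels are eigenvectors of co-analytic Toeplitz operators: for $\phi$ holomorphic on $\D$,
\[
P_{+}(\overline\phi\,k_\lambda)=T_{\overline\phi}k_\lambda=\overline{\phi(\lambda)}\,k_\lambda .
\]
Taking $\phi=\frac{b}{1-b}$ gives $\overline{\phi(\lambda)}=\frac{\overline{b(\lambda)}}{1-\overline{b(\lambda)}}$, and taking $\phi=\frac{b_0}{1-b}$ gives $\overline{\phi(\lambda)}=\frac{\overline{b_0(\lambda)}}{1-\overline{b(\lambda)}}$.

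With these in hand the computation is mechanical. For (ii) I would write, using the boundary identity,
\[
P_{+}(|f|^2k_\lambda)=P_{+}\!\Big(\frac{k_\lambda}{1-b}\Big)+P_{+}\!\Big(\frac{k_\lambda}{1-\overline b}\Big)-P_{+}(k_\lambda) ,
\]
observe that the first and third terms are already analytic (hence fixed by $P_{+}$), expand the middle term by the decomposition of $\frac{1}{1-\overline b}$ and apply the eigenvector relation; the two copies of $k_\lambda$ cancel and one is left with $\frac{k_\lambda}{1-b}+\frac{\overline{b(\lambda)}}{1-\overline{b(\lambda)}}k_\lambda$, which is exactly (ii). Part (i) is identical after inserting the inner factor $I$: the first and third terms become $\frac{Ik_\lambda}{1-b}$ and $Ik_\lambda$, the middle term expands via $\frac{I}{1-\overline b}=I+\overline{(b_0/(1-b))}$ into $Ik_\lambda+\frac{\overline{b_0(\lambda)}}{1-\overline{b(\lambda)}}k_\lambda$, the two copies of $Ik_\lambda$ cancel, and one obtains $\frac{Ik_\lambda}{1-b}+\frac{\overline{b_0(\lambda)}}{1-\overline{b(\lambda)}}k_\lambda$, i.e.\ (i).

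The one genuine obstacle is integrability, and it is where I would spend the care. Because $b$ is nonextreme, $\frac{1}{1-b}$ has positive real part and so lies in $\bigcap_{p<1}H^p$ but in general not in $H^1$; the same holds for $\frac{b}{1-b}$ and $\frac{b_0}{1-b}$. Hence the splitting of $P_{+}(|f|^2\,\cdot)$ into three pieces and the termwise application of $P_{+}$ are not literally valid $L^2$ (or even $L^1$) manipulations, although the functions $|f|^2k_\lambda$ and $|f|^2Ik_\lambda$ themselves are only in $L^1$. I would resolve this in one of two standard ways. The safe route is to note that $P_{+}$ of an $L^1$ function is holomorphic in $\D$, so it suffices to compare the two sides as holomorphic functions, i.e.\ to check that they have the same inner product with every $k_w$; here $\langle |f|^2Ik_\lambda,k_w\rangle=\frac{1}{2\pi}\int|f|^2\,Ik_\lambda\overline{k_w}\,d\theta$ is an absolutely convergent pairing of the $L^1$ function $|f|^2$ against the bounded function $Ik_\lambda\overline{k_w}$, and the partial-fraction identity $k_\lambda\overline{k_w}=\frac{1}{1-\overline\lambda w}\big(k_\lambda+\overline{k_w}-1\big)$ keeps every resulting integral of the form $\int|f|^2\times(\text{bounded})$, so that the three ingredients above apply rigorously. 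Alternatively one may replace $b$ by $b_r(z)=b(rz)$ with $r<1$, where all functions are bounded and the formal computation is literally legitimate, and then let $r\to1^-$. Either device converts the formal calculation into a proof.
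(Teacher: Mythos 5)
The paper itself gives no proof of this Lemma---it is quoted from \cite{HSS}---so your attempt must stand on its own. Its algebraic skeleton (the boundary identity for $|f|^2$, the use of $I\bar b=\bar b_0$, the eigenvector property of $k_\lambda$, the partial-fraction identity $k_\lambda\overline{k_w}=\frac{1}{1-\bar\lambda w}(k_\lambda+\overline{k_w}-1)$) is correct, but the integrability problem you flag at the end is a genuine gap, not a formality, and neither of your two repairs closes it. The decisive point is that an a.e.\ decomposition of an $L^1$ function into ``Smirnov-class boundary values plus the conjugate of Smirnov-class boundary values'' does \emph{not} determine its Riesz projection: for example
\[
-i \;=\; \Bigl(i\,\frac{1+z}{1-z}\Bigr)^{\!*} \;+\; \overline{\Bigl(\frac{-2iz}{1-z}\Bigr)^{\!*}}\qquad\text{a.e. on }\partial\D,
\]
with both functions in $\mathcal{N}^{+}\cap\bigcap_{p<1}H^p$ and the second vanishing at $0$, yet $P_{+}(-i)=-i\neq i\,\frac{1+z}{1-z}$. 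So reading $P_{+}$ off your splitting is invalid in exactly the class of functions you are working with. The same phenomenon shows your proof ``proves too much'': every ingredient you use ($|f|^2=\frac{1}{1-b'}+\frac{1}{1-\bar b'}-1$, the eigenvector relation) holds verbatim for \emph{any} pair $(b',a')$ with $f=\frac{a'}{1-b'}$, and non-special such pairs exist (add a positive singular measure to the Herglotz measure of $\frac{1+b}{1-b}$). Already for part (ii) your argument would then yield the formula for both $b$ and $b'$, and these two formulas differ by the nonconstant Cauchy transform of the singular measure---a contradiction. Hence the Lemma cannot follow from boundary-value identities alone; any correct proof must use what you never invoke, namely equation (\ref{outer}): $(b,a)$ is the pair \emph{associated} with $f$, so that $|f|^2\,\frac{d\theta}{2\pi}$ is the \emph{entire} Herglotz measure $\mu$ of $\frac{1+b}{1-b}$, with no singular part.

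Your two regularizations inherit this defect. In the first, after the partial-fraction reduction the quantities to evaluate are $\int Ik_\lambda\,d\mu$, $\int I\overline{k_w}\,d\mu$, $\int I\,d\mu$; asserting that ``the three ingredients now apply rigorously'' just reinstates the forbidden termwise splitting. (For part (ii) the route \emph{can} be finished honestly, but only via (\ref{outer}): it gives $\int \overline{k_w}\,d\mu=\frac12\bigl(\frac{1+b(w)}{1-b(w)}+1\bigr)=\frac{1}{1-b(w)}$ and $\int k_\lambda\,d\mu=\frac{1}{1-\overline{b(\lambda)}}$, and then the partial-fraction identity yields (ii) exactly; the integrals in (i) involving $I$ are not Herglotz transforms, so (i) needs more.) In the second, dilating $b$ to $b_r=b(r\cdot)$ destroys precisely the identity needed for (i): $I_r$ is no longer unimodular on $\partial\D$, so $I_r\overline{b_r}\neq\overline{b_{0,r}}$. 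A workable version keeps $I$ fixed and dilates only $b_0$: with $b^{[r]}:=I\cdot b_0(r\cdot)$ all denominators are bounded, $I\,\overline{b^{[r]}}=\overline{b_0(r\cdot)}$ still holds a.e., the formal computation is legitimate, and the densities $\frac{1-|b^{[r]}|^2}{|1-b^{[r]}|^2}$ converge to $|f|^2$ in $L^1(\partial\D)$ by Scheff\'e's lemma, because they are nonnegative, converge a.e., and all have integral $1$ (here $b^{[r]}(0)=0$, which uses $I(0)=0$, and $\|f\|_2=1$ via (\ref{outer}) enter once more). Passing to the limit then proves both (i) and (ii). So your computation is the right skeleton, but as written the proof is incomplete: the specialness of the associated pair must be injected, and the part-(i) dilation must be modified as above.
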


Since $I(r_n)=0$, (i) and (ii) in the Lemma yield
\begin{align*}
T_{1-b}T_{\bar f}(f I k_{r_n})&= Ik_{r_n}(1-\overline{b_0(r_n)}b_0)+
\overline{b_0(r_n)}k_{r_n}, \\
\noalign{\smallskip}
T_{1-b}T_{\bar f}(f  k_{r_n})&=k_{r_n}.
\end{align*}
Hence
\begin{equation}
T_{1-b}T_{\bar f}(f
k_{r_n}(I-\overline{b_0(r_n)}))=Ik_{r_n}(1-\overline{b_0(r_n)}b_0)=Ik_{r_n}^{b_0}.\label{Kr}
\end{equation}
It follows from \cite[vol. 2, Thm. 21.1]{FM} that
\[
\|k_{r_n}^{b_0}-
k_{-1}^{b_0}\|_{b_0}\underset{n\to\infty}{\longrightarrow}0.
\]

Next,  since $T_{I}\colon\, \H(b_0)\to \H(Ib_0)=\H(b)$ is an
isometry and $T_{1-b}T_{\bar f}$ is an isometry of $H^2$ onto
$\H(b)$, we see that $ \{fk_{r_n}(I-\overline{b_0(r_n)}) \}_{n\in
\N}$ is a bounded sequence in $H^2$. So it contains a subsequence
that converges weakly, say, to a function  $g\in H^2$. Without loss
of generality, we may assume that the sequence
$\{fk_{r_n}(I-\overline{b_0(r_n)})\}$ itself converges weakly to $g$. Then
for any point $z\in\D$,
\begin{align*}
g(z)&= \langle g, k_z\rangle= \lim\limits_{n\to\infty}\langle
fk_{r_n}(I-\overline{b_0(r_n)}), k_z\rangle \\
&=
\lim_{n\to\infty}\frac{f(z)(I(z)-\overline{b_0(r_n)})}{1-r_nz}=
 \frac{f(z)(I(z)+1)}{1+z}.
\end{align*}
Now observe that since
\[
\Vert fk_{r_n}(I-\overline{b_0(r_n)})\Vert_{2}=\Vert
k_{r_n}^{b_0}\Vert_{b_0}\quad \text{and}\quad \Vert
fk_{-1}(I+1)\Vert_{2}=\Vert k_{-1}^{b_0}\Vert_{b_0},
\]
$fk_{r_n}(I-\overline{b_0(r_n)})\to fk_{-1}(I+1)$ in $H^2$ strongly.
Finally, passing to the limit in (\ref{Kr}) gives
\[
T_{1-b}T_{\bar f}(fk_{-1}(I+1))=Ik_{-1}(1+b_0)=Ik_{-1}^{b_0},
\]
which proves (\ref{funkcja}).

\begin{rem}
One can check directly that the function $g=fk_{-1}(I+1)$ is in
$\ker T_{\frac{\bar I\bar f}{f}}\ominus fK_I$.

Indeed, we have
\[
T_{\frac {\bar I\bar
f}{f}}\left(fk_{-1}(I+1)\right)=P_{+}\left(\bar{f}\bar{I}\frac{I+1}{1+z}\right)
=P_{+}\left(\bar{f}\frac{\bar{z}(\bar{I}+1)}{\bar{z}+1}\right)=P_{+}\left(\bar{z}\bar{f}\overline{k_{-1}}(\bar{I}+1)\right)=0.
\]
To see that the functions $(fk_{r_n}I-b_0(r_n)fk_{r_n})$ are
orthogonal to $fK_{I}$ note that a function $h\in H^2$ is in $K_{I}$
if and only if $h= h - I P_{+}(\bar Ih)$. So, we have  to check that
for any $h\in H^2$,
\[
\langle fk_{r_n}I-b_0(r_n)fk_{r_n}, f(h-I P_{+}(\bar Ih))\rangle=0.
\]

Since the functions $\{k_{\lambda}, \lambda\in \mathbb{D}\}$ are
dense in $H^2$, it is enough to show that  for  any
$\lambda\in\mathbb D$,
\[
\langle fk_{r_n}I-b_0(r_n)fk_{r_n}, f(k_{\lambda}-I P_{+}(\bar
Ik_{\lambda}))\rangle=\langle fk_{r_n}I-b_0(r_n)fk_{r_n},
f(k_{\lambda}-\overline{I(\lambda)}Ik_{\lambda})\rangle= 0.
\]
Finally, the last equality follows from
\begin{align*}
\langle fk_{r_n}I,fk_{\lambda}\rangle&=
\frac{I(\lambda)k_{r_n}(\lambda)}{1- b(\lambda)}+
b_0(r_n)k_{r_n}(\lambda),\\
\langle fk_{r_n}I,
-\overline{I(\lambda)}fIk_{\lambda}\rangle&=
-\frac{I(\lambda)k_{r_n}(\lambda)}{1-b(\lambda)},\\
 \langle -b_0(r_n)fk_{r_n}, fk_{\lambda}\rangle&=
-\frac{b_0(r_n)k_{r_n}(\lambda)}{1-b(\lambda)},\\
\intertext{ and}
 \langle
-b_0(r_n)fk_{r_n},-\overline{I(\lambda)}fIk_{\lambda}\rangle&=
b_0(r_n)I(\lambda)\frac{b_0(\lambda)k_{r_n}(\lambda)}{1-b(\lambda)}=
\frac{b_0(r_n)b(\lambda)k_{r_n}(\lambda)}{1-b(\lambda)}.
\end{align*}

\end{rem}

\section{A remark on orthogonal complement of $\M(a)$ in $\H(b)$}

In this section we  continue to assume that $b$ is nonextreme. Let
$\H_0(b)$ denote the orthogonal complement of $\M(a)$ in $\H(b)$.
Let $Y$ be the restriction of the shift operator $S$ to $\H(b)$ and
let $Y_0$ be the compression of $Y$ to the subspace $\H_0(b)$.
Necessary and sufficient conditions for  $\H_0(b)$ to have finite
dimension are given in Chapter X of \cite{Sarason4}.  The space
$\H_0(b)$ depends on the spectrum of the restriction of the operator
$Y^\ast$ to $\H_0(b)$ which actually equals $Y^\ast_0$. The spectrum
of $Y^\ast_0$ is contained in the unit circle. More exactly, if
$|z_0|=1$ and $k$ is a positive integer, then
$\ker(Y-\bar{z}_0)^k\subset \H_0(b)$ and the dimension of $ \H_0(b)$
is $N$  if  and only if the operator $Y^\ast_0$ has eigenvalues
$z_1$, $z_2$, \ldots, $z_s$ with their algebraic multiplicities
$n_1$, \ldots, $n_s$  and $N=n_1+n_2+\dots+n_s$.

In this section we consider the case when the eigenspaces
correspondings to eigenvalues $z_1$, $z_2$, \ldots, $z_s$ are one
dimensional.

For $|\lambda|=1$ let $\mu_{\lambda}$ denote the measure for which
equality in (\ref{Herglotz}) holds when $b$ is replaced by
$\bar{\lambda}b$. If we put $F_{\lambda}(z)=
\frac{a}{1-\bar{\lambda}b}$, then the Radon-Nikodym derivative of
the absolutely continuous component of $\mu_{\lambda}$ is
$|F_{\lambda}|^2$.

The following theorem is proved in Chapter X of \cite{Sarason4}.
\begin{theoremS}
Let $z_0$ be a point of $\partial\D$ and $\lambda$ a point of
$\partial\D$ such that the measure $\mu_\lambda$ is absolutely
continuous. The following conditions are equivalent.
\begin{enumerate}
\item[{\rm(i)}]
$\bar{z}_0$ is an eigenvalue of $Y^\ast$.
\item[{\rm(ii)}]
The function $\dfrac{F_\lambda(z)}{1-\bar{z}_0z}$ is in $H^2$.
\item[{\rm(iii)}]
The function $b$ has an angular derivative in the sense of
Carath\'eodory at $z_0$.
\end{enumerate}
\end{theoremS}

The space $\H_0(b)$ is described by means of an operator $A_\lambda$
on $H^2$ that intertwines
$T_{1-\bar{\lambda}b}T_{\overline{F}_\lambda}$ with the operator
$Y^\ast$, i.e.
\begin{equation}
T_{1-\bar{\lambda}b}T_{\overline{F}_\lambda}A_\lambda=Y^\ast
T_{1-\bar{\lambda}b}T_{\overline{F}_\lambda}.\label{inter}
\end{equation}
The operator $A_\lambda$ is given by
\[
A_\lambda=S^\ast-F_\lambda(0)^{-1}(S^\ast F_\lambda\otimes 1).
\]

It follows from the proof of Sarason's theorem that if one of
conditions (i) -- (iii) holds true, then the space
$\ker(A_\lambda-\bar{z}_0)$ is one dimensional and is spanned by the
function
\begin{equation}
g(z)=\frac {F_{\lambda}(z)}{1-\overline{z}_0z}=
F_{\lambda}(z)k_{z_0}(z).\label{functiong}
\end{equation}
We also  note that since $b$ has  the angular derivative in the
sense of Carath\'{e}odory at $z_0$, the function
\[
 k_{z_0}^b(z)=
\dfrac{1-\overline{b(z_0)}b(z)}{1-\bar{z_0}z},
\]
where $b(z_0) $ is the nontangential limit of $b$ in $z_0$, is in
$\H(b)$.

Using Sarason's theorem cited above  we show the following
\begin{theorem}
If   the assumptions of  Sarason's theorem are
satisfied and $\bar{z}_0$ is an eigenvalue of $Y^\ast$, then
$\ker(Y^\ast-\bar{z_0})$ is spanned by $k_{z_0}^b$.
\end{theorem}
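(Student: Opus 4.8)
The plan is to transport the spectral problem for $Y^\ast$ on $\H(b)$ to the model operator $A_\lambda$ on $H^2$ through the operator $W:=T_{1-\bar\lambda b}T_{\overline{F}_\lambda}$, and then to identify the resulting eigenvector by a reproducing-kernel computation. First I would note that, since $|\lambda|=1$, the pair $(\bar\lambda b,a)$ has associated outer function $F_\lambda=\frac{a}{1-\bar\lambda b}$, and a one-line computation with the defining formula of the kernel gives $k_w^{\bar\lambda b}=k_w^b$, so $\H(\bar\lambda b)=\H(b)$. As $\mu_\lambda$ is absolutely continuous, the pair $(\bar\lambda b,a)$ is special, hence by the quoted property of $T_{1-b}T_{\bar f}$ the operator $W$ is an isometry of $H^2$ \emph{onto} $\H(b)$. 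The intertwining relation (\ref{inter}) then gives $Y^\ast=WA_\lambda W^{-1}$, so $W$ maps $\ker(A_\lambda-\bar z_0)$ bijectively onto $\ker(Y^\ast-\bar z_0)$. Since by the proof of Sarason's theorem $\ker(A_\lambda-\bar z_0)$ is one dimensional and spanned by $g=F_\lambda k_{z_0}$ (see (\ref{functiong})), everything reduces to computing $Wg$.

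The key identity is $W(F_\lambda k_w)=\dfrac{1}{1-\lambda\overline{b(w)}}\,k_w^b$ for $w\in\D$. To get it I would apply part (ii) of the Lemma — an identity valid for any special pair, here $(\bar\lambda b,a)$ — to obtain
\[
T_{\overline{F}_\lambda}(F_\lambda k_w)=P_{+}\bigl(|F_\lambda|^2k_w\bigr)=\frac{k_w}{1-\bar\lambda b}+\frac{\lambda\overline{b(w)}}{1-\lambda\overline{b(w)}}\,k_w.
\]
Because $1-\bar\lambda b\in H^\infty$, the operator $T_{1-\bar\lambda b}$ is just multiplication by $1-\bar\lambda b$ on $H^2$, and a short simplification merges the two terms into
\[
W(F_\lambda k_w)=k_w+\frac{\lambda\overline{b(w)}}{1-\lambda\overline{b(w)}}\,(1-\bar\lambda b)k_w=k_w\cdot\frac{1-\overline{b(w)}\,b}{1-\lambda\overline{b(w)}}=\frac{1}{1-\lambda\overline{b(w)}}\,k_w^b.
\]
Thus $W$ carries the reproducing kernel $k_w$ of $H^2$ to the de Branges--Rovnyak kernel $k_w^b$, which is precisely the form demanded by the conclusion.

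Finally I would let $w\to z_0$. Condition (ii) of Sarason's theorem says $g=F_\lambda k_{z_0}\in H^2$, and since $b$ has an angular derivative in the sense of Carath\'eodory at $z_0$ the norms $\|k_w^b\|_b^2=\frac{1-|b(w)|^2}{1-|w|^2}$ stay bounded and $k_w^b\to k_{z_0}^b$ in $\H(b)$ as $w\to z_0$ nontangentially. Hence $\{F_\lambda k_w\}$ is bounded in $H^2$ and converges pointwise to $g$, so it converges weakly to $g$; applying the bounded operator $W$ and comparing with the interior identity forces
\[
Wg=\frac{1}{1-\lambda\overline{b(z_0)}}\,k_{z_0}^b,
\]
exactly the limiting passage already used in Section~4 (where $k_{r_n}^{b_0}\to k_{-1}^{b_0}$). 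Choosing $\lambda\neq b(z_0)$, which is harmless since the conclusion is independent of $\lambda$, makes the scalar finite and nonzero, so $\ker(Y^\ast-\bar z_0)=\C\,Wg=\C\,k_{z_0}^b$. I expect the boundary passage to be the only delicate point: one must justify that the interior kernel identity survives $w\to z_0$ and that the limiting scalar stays finite and nonzero, and this is exactly where the Carath\'eodory hypothesis (equivalently $F_\lambda k_{z_0}\in H^2$) is needed; the interior algebra is routine once the Lemma is at hand.
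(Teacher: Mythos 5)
Your proposal follows the same skeleton as the paper's proof: transport $\ker(Y^\ast-\bar z_0)$ to $\ker(A_\lambda-\bar z_0)=\C\,F_\lambda k_{z_0}$ via the surjective isometry $W=T_{1-\bar\lambda b}T_{\overline{F}_\lambda}$, establish the interior identity $W(F_\lambda k_w)=(1-\lambda\overline{b(w)})^{-1}k_w^b$, and pass to the nontangential limit $w\to z_0$ using the Carath\'eodory hypothesis and norm convergence of the kernels $k_w^b$. The only real difference of route is how the interior identity is obtained: you apply part (ii) of the Lemma of Section 4 to the pair $(\bar\lambda b,a)$, whereas the paper derives the same formula from the operator $V_{\bar\lambda b}$ of (\ref{Vb}) through the identity $V_{\bar\lambda b}\bigl((1-\lambda\overline{b(w)})k_w\bigr)=k_w^b$. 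Your use of the Lemma is legitimate --- the identity does hold for an arbitrary special pair, without the normalizations ($\|f\|_2=1$, hence $b(0)=0$) implicit in Section 4 --- but you assert this generality rather than verify it; the paper's $V_b$ route sidesteps the issue because (\ref{Vb}) is stated for arbitrary nonextreme $b$.

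The one genuine flaw is at the end, where you say one may ``choose'' $\lambda\neq b(z_0)$ ``since the conclusion is independent of $\lambda$.'' But $\lambda$ is not free: it is fixed by the hypothesis that $\mu_\lambda$ is absolutely continuous, and nothing in your argument guarantees the existence of another $\lambda'$ with $\mu_{\lambda'}$ absolutely continuous, let alone one with $\lambda'\neq b(z_0)$. Moreover, $\lambda\neq b(z_0)$ is needed earlier than you acknowledge: already for the boundedness of $\{F_\lambda k_w\}$ in $H^2$ (equivalently, to keep $|1-\lambda\overline{b(w)}|$ bounded below along the approach), not merely for the finiteness of the final scalar. The correct repair is the one the paper makes: since $b$ has an angular derivative in the sense of Carath\'eodory at $z_0$, absolute continuity of $\mu_\lambda$ automatically forces $b(z_0)\neq\lambda$ (Sarason, \emph{Sub-Hardy Hilbert Spaces in the Unit Disk}, VI-7 and VI-9; if $b(z_0)=\lambda$ then $\mu_\lambda$ would carry a point mass at $z_0$). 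With that citation replacing your ``choice,'' your argument is complete and coincides with the paper's.
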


\begin{proof}
In view of (\ref{functiong}) and (\ref{inter}) the space
$\ker(Y^\ast-\bar{z_0})$ is spanned by
\[
h= T_{1-\overline{\lambda}b}T_{\overline{F}_{\lambda}}g=
T_{1-\overline{\lambda}b}T_{\overline
{F}_{\lambda}}(F_{\lambda}k_{z_0}).
\]

It is known that if $V_b$ is given by (\ref{Vb}), then for a fixed
$w\in\D$,
\[
V_b((1-\overline{b(w)})k_w)=k^b_w.
\]

Hence we get
\begin{align*}
V_{\bar{\lambda}b}((1-\lambda\overline {b(w)})k_{w})(z)&=
(1-\bar{\lambda}b(z))( 1-\lambda\overline{b(w)}) \int_{\partial
\D}\frac{|F_{\lambda}(e^{i\theta})|^2d\theta}{(1-\bar
we^{i\theta})(1-ze^{-i\theta})}\\ &=
(1-\bar{\lambda}b(z))T_{\overline{F}_{\lambda}}((1-\lambda\overline
{b(w)})F_{\lambda}k_w)(z)=k_{w}^b(z).
\end{align*}

Let $\{z_n\}$ be a sequence in $\mathbb D$ converging
nontangentially to $z_0$. Then
\[
T_{1-\overline{\lambda}b}T_{\overline
{F}_{\lambda}}((1-\lambda\overline{b(z_n)})F_{\lambda}k_{z_n})=k_{z_n}^b.
\]
Observe also that since $\mu_{\lambda}$ is absolutely continuous,
$b(z_0)\ne \lambda$ \cite[VI-7, VI-9]{Sarason4}. Moreover, we know
that $k_z^b$ tends to $k_{z_0}^b$ weakly and $\|k_{z}^b\|_b$ tends
to $\|k_{z_0}^b\|_b$ as $z$ tends nontangentially to $z_0$
\cite[VI-5]{Sarason4}. Clearly this implies that $k_z^b$ tends to
$k_{z_0}^b$ in norm as $z$ tends to $z_0$ nontagentially. It then
follows that the sequence
$\{(1-\lambda\overline{b(z_n)})F_{\lambda}k_{z_n}\}$ converges in
$H^2$, which in turn implies compact  and pointwise convergence.
Hence passing to the limit in the last equality yields
\[
T_{1-\overline{\lambda}b}T_{\overline
{F}_{\lambda}}(F_{\lambda}k_{z_0})= Ck_{z_0}^b,
\]
where $C=(1-\lambda\overline{b(z_0)})^{-1}$.
\end{proof}

Finally we remark that the next Corollary generalizes results
obtained in \cite{FHR1} and in \cite{LN} for the case when pairs
$(b,a)$ are rational.

\begin{wn}
If $z_1$, $z_2$, \ldots, $z_s$ are the only eigenvalues of $Y_0$ and
each of them is of multiplicity one, then $\H_0(b)$ is spanned by
the function $k_{z_1}^b$, $k_{z_2}^b$, \ldots, $k_{z_s}^b$.
\end{wn}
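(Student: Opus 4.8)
The plan is to deduce the Corollary from Theorem~3, which already settles the single-eigenvalue case, by combining it with the dimension formula recorded at the beginning of this section and an elementary linear-independence argument. First I would carry out the dimension count: since $z_1,\dots,z_s$ are the only eigenvalues and each has multiplicity one, the formula $\dim\H_0(b)=n_1+\dots+n_s$ with every $n_j=1$ gives $\dim\H_0(b)=s$. Thus it suffices to exhibit $s$ linearly independent elements of $\H_0(b)$ among the functions $k_{z_1}^b,\dots,k_{z_s}^b$.

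Next, for each $j$ I would invoke Theorem~3 at the boundary point $z_j$. Since $\bar z_j$ is an eigenvalue of $Y^\ast$, the hypotheses of Sarason's theorem are in force (equivalently $b$ has an angular derivative in the sense of Carath\'eodory at $z_j$), so Theorem~3 applies and tells me that the one-dimensional eigenspace $\ker(Y^\ast-\bar z_j)$ is spanned by $k_{z_j}^b$. Moreover, the fact recorded at the start of the section that the eigenspaces $\ker(Y^\ast-\bar z_0)$ for $|z_0|=1$ are contained in $\H_0(b)$ guarantees that each kernel function $k_{z_j}^b$ actually lies in $\H_0(b)$, so these are genuine elements of the space I am trying to span.

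Finally I would check that $k_{z_1}^b,\dots,k_{z_s}^b$ are linearly independent. Because $\H_0(b)$ is invariant under $Y^\ast$ and $Y_0^\ast=Y^\ast|_{\H_0(b)}$, these functions are eigenvectors of $Y_0^\ast$ associated with the distinct eigenvalues $\bar z_1,\dots,\bar z_s$, and eigenvectors for distinct eigenvalues are automatically independent. Having produced $s$ independent vectors in the $s$-dimensional space $\H_0(b)$, I conclude they form a basis, which is the assertion. The one point to be handled with care --- rather than a genuine obstacle --- is that the reduction to the ordinary (geometric) eigenspaces, and hence to the reproducing kernels alone, is exactly what the multiplicity-one hypothesis buys: for an eigenvalue of higher multiplicity one would have to adjoin generalized eigenvectors, and the spanning set would no longer consist solely of the $k_{z_j}^b$. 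I would also keep the conjugation conventions linking the boundary point $z_j$, the eigenvalue $\bar z_j$ of $Y^\ast$, and the eigenfunction $k_{z_j}^b$ consistent with the statement of Theorem~3.
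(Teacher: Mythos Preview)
Your argument is correct and is exactly the deduction the paper leaves implicit: the Corollary is stated without proof, immediately after Theorem~3 and the dimension formula $\dim\H_0(b)=n_1+\dots+n_s$, and is meant to follow from precisely the three ingredients you assemble---the dimension count, Theorem~3 identifying each $\ker(Y^\ast-\bar z_j)$ with $\C k_{z_j}^b\subset\H_0(b)$, and the linear independence of eigenvectors for distinct eigenvalues. Your remark about keeping the conjugation conventions straight is well taken, since the paper itself is not fully consistent (it writes ``eigenvalues of $Y_0$'' in the Corollary but ``eigenvalues of $Y_0^\ast$'' in the preceding paragraph).
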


\medskip

\noindent
{\bf Acknowledgements.}
The third named author would like to thank the Institute of Mathematics of the Maria Curie-Sk{\l}odowska University for supporting his visit to Lublin where part of this paper was written.

\vspace{.2in}

\end{document}